\newtheorem{theorem}{Theorem}[section]
\newtheorem{lemma}[theorem]{Lemma}
\newtheorem{proposition}[theorem]{Proposition}
\newtheorem{corollary}[theorem]{Corollary}
\theoremstyle{definition}
\newtheorem{definition}[theorem]{Definition}
\newtheorem{example}[theorem]{Example}
\newtheorem{remark}[theorem]{Remark}
\definecolor{A}{rgb}{.75,1,.75}
\numberwithin{equation}{section}
\newcommand{\ad}{\operatorname{ad}}
\newcommand{\del}{\delta}
\newcommand{\Del}{\Delta}
\newcommand{\ev}{\operatorname{ev}}
\newcommand{\W}{\mathcal{W}}
\newcommand{\End}{\operatorname{End}}
\newcommand{\mb}{\mathfrak{b}}
\newcommand{\ep}{\epsilon}
\newcommand{\gl}{\mathfrak{gl}}
\newcommand{\sla}{\mathfrak{sl}}
\newcommand{\g}{\mathfrak{g}}
\newcommand{\gr}{\operatorname{gr}}
\newcommand{\glMN}{\mathfrak{gl}_{M|N}}
\newcommand{\la}{\lambda}
\newcommand{\pa}{\overline}
\newcommand{\sgn}{\operatorname{sgn}}
\newcommand{\rdet}{\operatorname{rdet}}
\newcommand{\pr}{\operatorname{pr}}
\newcommand{\col}{\text{col}}
\newcommand{\ov}{\overline}
\newcommand{\row}{\text{row}}
\newdimen\Hoogte    \Hoogte=12pt
\newdimen\Breedte   \Breedte=12pt
\newdimen\Dikte     \Dikte=0.5pt
\newenvironment{Young}{\begingroup
       \def\vr{\vrule height0.8\Hoogte width\Dikte depth 0.4\Hoogte}
       \def\fbox##1{\vbox{\offinterlineskip
                    \hrule height\Dikte
                    \hbox to \Breedte{\vr\hfill##1\hfill\vr}
                    \hrule height\Dikte}}
       \vbox\bgroup \offinterlineskip \tabskip=-\Dikte \lineskip=-\Dikte
            \halign\bgroup &\fbox{##\unskip}\unskip  \crcr }
       {\egroup\egroup\endgroup}
\def\Diagram#1{\relax\ifmmode\vcenter{\,\begin{Young}#1\end{Young}\,}\else%
              $\vcenter{\,\begin{Young}#1\end{Young}\,}$\fi}
\begin{document}
\title[Finite $W$-superalgebras and truncated super Yangians] {Finite $W$-superalgebras and truncated super Yangians}

\author[Yung-Ning Peng]{Yung-Ning Peng}
\address{Institute of Mathematics, Academia Sinica,
Taipei City, Taiwan, 10617} \email{ynp@math.sinica.edu.tw}

\subjclass[2010]{17B37}

\begin{abstract}
Let $Y_{m|n}^{\ell}$ be the super Yangian of general linear Lie superalgebra for $\gl_{m|n}$. Let $e\in\gl_{m\ell|n\ell}$ be a ``rectangular" nilpotent element and $\W_e$ be the finite $W$-superalgebra associated to $e$. We show that $Y_{m|n}^{\ell}$ is isomorphic to $\W_e$.
\end{abstract}

\keywords{Yangian, finite $W$-algebra.}

\maketitle


\section{Introduction}
The connection between Yangians and finite $W$-algebras of type A was first noticed by mathematical physicists Briot, Ragoucy and Sorba in \cite{RS, BR1} under some restrictions, and then constructed in general cases by Brundan and Kleshchev explicitly in \cite{BK}. In this way, a realization of finite $W$-algebra in terms of truncated Yangian is obtained, and this provides a useful tool for the study of the representation theory of finite $W$-algebras. In this note, we establish such a connection between finite $W$-superalgebras and super Yangians of type A where the nilpotent element $e$ is {\em rectangular} (cf. \textsection 3 for the precise definition).

Let $Y_{m|n}$ be the super Yangian for $\gl_{m|n}$ and $Y_{m|n}^{\ell}$ be the truncated super Yangian of level $\ell$ for some non-negative integer $\ell$. Our main result is that there exists an isomorphism of filtered superalgebras between $Y_{m|n}^{\ell}$ and $\W_e$, the finite $W$-superalgebra associated to a rectangular $e\in \gl_{m\ell|n\ell}$. Such a connection was firstly obtained in \cite{BR3}. In this article, we provide a new proof in a different approach, which is similar to \cite{BK}. 

In a recent paper \cite{BBG}, such a connection between the super Yangian $Y_{1|1}$ and the finite $W$-superalgebra associated to a {\em principal} nilpotent element $e$ is obtained. In particular, the specialization of our result when $m=n=1$ coincides with the special case in \cite{BBG} when $e$ is both principal and rectangular.

\section{Super Yangian $Y_{m|n}$ and its truncation $Y_{m|n}^{\ell}$}
Let $m$ and $n$ be non-negative integers. Let $\mb$ be an $\ep$-$\del$ sequence of $\gl_{m|n}$ introduced in \cite{FSS1,LS,CW}. To be precise, $\mb$ is a sequence consisting of exactly $m$ $\del$'s and $n$ $\ep$'s, both indistinguishable. 
For example, $\del\ep\del\ep\ep$ is such a sequence of $\gl_{2|3}$.

Note that the set of such sequences is in one-to-one correspondence with the classes of the simple systems of $\gl_{m|n}$ modulo the Weyl group action, and each $\ep$-$\del$ sequence $\mb$ gives rise to a distinguished Borel subalgebra, which will be also denoted by $\mb$. In particular, the sequence $\mb^{st}:=\stackrel{m}{\overbrace{\delta\ldots\delta}}\,\stackrel{n}{\overbrace{\epsilon\ldots\epsilon}}$ corresponds to the standard Borel subalgebra consisting of upper triangular matrices.

\begin{remark}
Contrary to the $\gl_n$ case, there are many inequivalent simple root systems and hence many inequivalent Dynkin diagrams for $\gl_{m|n}$. It is also true that each $\ep$-$\del$ sequence corresponds to exactly one of the Dynkin diagrams. As an example, the following are two inequivalent Dynkin diagrams for $\gl_{2|3}$ (equivalently, $\sla_{2|3}$), where their corresponding simple systems and $\ep$-$\del$ sequences are listed:

\begin{center}
\begin{equation*}
\hskip -3cm \setlength{\unitlength}{0.16in}
\begin{picture}(20,1)
\put(0,0){\makebox(0,0)[c]{$\bigcirc$}}
\put(3.5,0){\makebox(0,0)[c]{$\bigotimes$}}
\put(7,0){\makebox(0,0)[c]{$\bigcirc$}}
\put(10.5,0){\makebox(0,0)[c]{$\bigcirc$}}

\put(17,0){\makebox(0,0)[c]{$\bigotimes$}}
\put(20.5,0){\makebox(0,0)[c]{$\bigotimes$}}
\put(24,0){\makebox(0,0)[c]{$\bigotimes$}}
\put(27.5,0){\makebox(0,0)[c]{$\bigcirc$}}

\put(0.5,0){\line(1,0){2.5}}
\put(4,0){\line(1,0){2.5}}
\put(7.5,0){\line(1,0){2.5}}

\put(17.5,0){\line(1,0){2.5}}
\put(21,0){\line(1,0){2.5}} 
\put(24.5,0){\line(1,0){2.5}}

\put(0,-1.3){\makebox(0,0)[c]{\tiny$\del_1-\del_2$}}
\put(3.5,-1.3){\makebox(0,0)[c]{\tiny$\del_2-\ep_1$}}
\put(7,-1.3){\makebox(0,0)[c]{\tiny$\ep_1-\ep_2$}}
\put(10.5,-1.3){\makebox(0,0)[c]{\tiny$\ep_2-\ep_3$}}

\put(17,-1.3){\makebox(0,0)[c]{\tiny$\del_1-\ep_1$}}
\put(20.5,-1.3){\makebox(0,0)[c]{\tiny$\ep_1-\del_2$}}
\put(24,-1.3){\makebox(0,0)[c]{\tiny$\del_2-\ep_2$}}
\put(27.5,-1.3){\makebox(0,0)[c]{\tiny$\ep_2-\ep_3$}}
\put(5,1.5){\makebox(0,0)[c]{$\mb=\mb^{st}=\del\del\ep\ep\ep$}}
\put(22,1.5){\makebox(0,0)[c]{$\mb=\del\ep\del\ep\ep$}}

\end{picture}
\end{equation*}
\vskip 0.8cm
\end{center}
Here $\bigcirc$ denotes an even simple root, $\bigotimes$ denotes an odd simple root, $\del_i$ and $\ep_j$ are elements of $\mathfrak{h}^*$ sending a matrix to its $i$-th and $(2+j)$-th diagonal entry, respectively. Such a phenomenon can also be observed in other types of Lie superalgebras. We refer the reader to \cite{FSS2,CW} for the detail.
\end{remark}

For each $1\leq i\leq m+n$, define a number $|i|\in\mathbb{Z}_2$, called the parity of $i$, as follows:
\begin{equation}\label{parity}
|i|:= \left\{
\begin{array}{ll}
\pa{0} &\hbox{if the $i$-th position of $\mb$ is $\del$,}\\
\pa{1} &\hbox{if the $i$-th position of $\mb$ is $\ep$.}
\end{array}
\right.
\end{equation}

For a given $\mb$, the super Yangian $Y_{m|n}$ (cf. \cite{Na}) is the associative $\mathbb{Z}_2$-graded algebra (i.e., superalgebra) over $\mathbb{C}$ with generators
\begin{equation}\label{RTTgen}
\left\lbrace t_{ij}^{(r)}\,| \; 1\le i,j \le m+n; r\ge 1\right\rbrace,
\end{equation}
subject to certain relations.

To write down the relations, we firstly define the parity of $t_{ij}^{(r)}$ to be $|i|+|j|$, and $t_{ij}^{(r)}$ is called an even (odd, respectively) element if its parity is $\ov{0}$ ($\ov{1}$, respectively).
 
The defining relations are
\begin{equation}\label{RTT}
[t_{ij}^{(r)}, t_{hk}^{(s)}] = (-1)^{|i|\,|j| + |i|\,|h| + |j|\,|h|}
\sum_{t=0}^{\mathrm{min}(r,s) -1} \Big( t_{hj}^{(t)}\, t_{ik}^{(r+s-1-t)} -   t_{hj}^{(r+s-1-t)}\, t_{ik}^{(t)} \Big),
\end{equation}
where the bracket is understood as the supercommutator. 
By convention, we set $t_{ij}^{(0)}:=\del_{ij}$.

In the case when $m=0$ or $n=0$, it reduces to the usual Yangian. The generators in (\ref{RTTgen}) are called the RTT generators while the relations (\ref{RTT}) are called the RTT relations. As in the case of $\gl_{m|n}$, $Y_{m|n}$ are isomorphic for all $\mb$. Note that the original definition in \cite{Na} corresponds to the case when $\mb=\mb^{st}$. 

For all $1\leq i,j\leq m+n$, we define the formal power series 
\[
t_{ij}(u):= \sum_{r\geq 0} t_{ij}^{(r)}u^{-r}.
\]
It is well-known (cf. \cite{Na}) that $Y_{m|n}$ is a Hopf-algebra, where the comultiplication 
$\Del:Y_{m|n}\rightarrow Y_{m|n}\otimes Y_{m|n}$ is defined by 
\begin{equation}\label{Del}
\Del(t_{ij}^{(r)})=\sum_{s=0}^r \sum_{k=1}^{m+n} t_{ik}^{(r-s)}\otimes t_{kj}^{(s)},
\end{equation}
and one has the evaluation homomorphism $\ev:Y_{m|n}\rightarrow U(\gl_{m|n})$ defined by
\begin{equation}\label{ev}
\ev\big(t_{ij}(u)\big):= \del_{ij} + (-1)^{|i|} e_{i,j},
\end{equation}
where $e_{i,j}$ denotes the elementary matrix in $\gl_{m|n}$.

\begin{definition}
Let $\ell$ be a non-negative integer and $I_\ell$ be the 2-sided ideal of $Y_{m|n}$ generated by the elements $\{ t_{ij}^{(r)}|1\leq i,j\leq m+n, r>\ell \,\}$. The {\em truncated super Yangian of level $\ell$}, denoted by $Y_{m|n}^{\ell}$, is defined to be the quotient $Y_{m|n}/I_\ell$.
\end{definition}

Note that $Y_{m|n}$, $I_\ell$ and the quotient $Y_{m|n}^{\ell}$ are all independent of the choice of $\mb$. The image of $t_{ij}^{(r)}$ in the quotient $Y_{m|n}^{\ell}$ will be denoted by $t_{ij;\mb}^{(r)}$, since it will be identified with certain element depending on $\mb$ later; see (\ref{tdef}).

Define $\kappa_1=\ev$, and for any integer $\ell\geq 2$, we define the following homomorphism 
\begin{equation}\label{kappa}
\kappa_\ell:=(\overbrace{\ev\otimes\cdots\otimes\ev}^{\ell-copies})\circ \Del^{\ell-1}:Y_{m|n}\rightarrow U(\gl_{m|n})^{\otimes \ell},
\end{equation}
then we have
\begin{equation}\label{kpimg}
\kappa_\ell(t_{ij}^{(r)})=\sum_{1\leq s_1<\cdots<s_r\leq \ell}\,\,\sum_{1\leq i_1,\ldots, i_{r-1}\leq m+n} 
(-1)^{|i|+|i_1|+\cdots+|i_{r-1}|}
e_{i,i_1}^{[s_1]}e_{i_1,i_2}^{[s_2]}\cdots e_{i_{r-1},j}^{[s_r]},
\end{equation}
where $e_{i,j}^{[s]}:= 1^{\otimes(s-1)}\otimes e_{i,j}\otimes 1^{\otimes (\ell-s)}$.

The following proposition is a PBW theorem for $Y_{m|n}^{\ell}$ and $Y_{m|n}$. 
\begin{proposition}\cite[Theorem 1]{Go}\label{PBWSY}
Let $\kappa_\ell$, $I_\ell$ be as above.
\begin{enumerate}
\item[(1)] The kernel of $\kappa_\ell$ is exactly $I_{\ell}$.
\item[(2)]
The set of all supermonomials in the elements of $Y_{m|n}^{\ell}$
\[
\left\lbrace t_{ij;\mb}^{(r)}| 1\leq i,j\leq m+n, 1\leq r\leq \ell\right\rbrace
\] taken in some fixed order
forms a basis for $Y_{m|n}^{\ell}$.
\item[(3)]
The set of all supermonomials in the elements of $Y_{m|n}$
\[
\left\lbrace t_{ij}^{(r)}| 1\leq i,j\leq m+n,  r\geq 1 \right\rbrace
\] taken in some fixed order
forms a basis for $Y_{m|n}$.
\end{enumerate}
\end{proposition}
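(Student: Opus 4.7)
The plan is to use a filtration argument combined with the explicit formula (\ref{kpimg}) for $\kappa_\ell$ to prove all three statements. First I would establish an upper bound on the size of $Y_{m|n}^{\ell}$ (and $Y_{m|n}$) via a spanning argument, then obtain a matching lower bound via the injectivity of the induced map $\bar\kappa_\ell$.

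For the spanning bound, equip $Y_{m|n}$ with the loop filtration declaring $\deg t_{ij}^{(r)} = r-1$. Each product $t_{hj}^{(t)} t_{ik}^{(r+s-1-t)}$ on the right-hand side of the RTT relation (\ref{RTT}) has loop degree $(t-1) + (r+s-t-2) = r+s-3$, which is strictly less than the a priori degree $r+s-2$ of the supercommutator on the left. Hence $\gr Y_{m|n}$ is supercommutative, and a standard straightening argument shows the ordered supermonomials from (3) span $Y_{m|n}$. Quotienting by $I_\ell$ and noting that any monomial containing a factor $t_{ij}^{(r)}$ with $r > \ell$ lies in $I_\ell$, this yields the spanning half of (2) as well.

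For linear independence, formula (\ref{kpimg}) shows $\kappa_\ell(t_{ij}^{(r)}) = 0$ for $r > \ell$, so $I_\ell \subseteq \ker \kappa_\ell$ and $\kappa_\ell$ descends to $\bar\kappa_\ell : Y_{m|n}^{\ell} \to U(\gl_{m|n})^{\otimes \ell}$. Equip the target with the Kazhdan filtration $\deg e_{i,j}^{[s]} = 1$, whose associated graded is the supersymmetric algebra on the symbols $\overline{e_{i,j}^{[s]}}$. By (\ref{kpimg}), the principal symbol of $\bar\kappa_\ell(t_{ij;\mb}^{(r)})$ for $1 \leq r \leq \ell$ is an elementary-supersymmetric expression of degree $r$ in the $\overline{e_{i,j}^{[s]}}$, and these have pairwise distinct top-lex leading monomials under a fixed ordering on indices. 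Injectivity of $\gr \bar\kappa_\ell$ then yields injectivity of $\bar\kappa_\ell$, delivering both (1) and the linear-independence half of (2). Finally, (3) follows by lifting: any supermonomial involves only finitely many indices, so choosing $\ell$ larger than the largest $r$ appearing reduces any relation in $Y_{m|n}$ to a relation in $Y_{m|n}^{\ell}$, which must be trivial by (2).

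The main obstacle is the algebraic-independence step inside the supersymmetric polynomial ring: one must carefully track the signs coming from both the parity factor $(-1)^{|i|}$ in the evaluation map (\ref{ev}) and the prefactor $(-1)^{|i|+|i_1|+\cdots+|i_{r-1}|}$ in (\ref{kpimg}), and choose an ordering on supermonomials compatible with the super-sign conventions so that the leading term of any non-trivial linear combination is genuinely non-zero. In the purely even case this is the classical argument for the ordinary Yangian; in the super setting this is where Gow's proof in \cite{Go} invests most of its work.
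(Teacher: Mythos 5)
First, a point of comparison: the paper does not prove this proposition at all --- it is quoted from Gow \cite[Theorem 1]{Go} --- so there is no internal proof to match against. Your outline is, in substance, the standard argument (Brundan--Kleshchev in the even case, Gow in the super case): spanning from a filtration on $Y_{m|n}$, linear independence from injectivity of the induced map $\bar\kappa_\ell$ checked on associated graded algebras, and part (3) by passing to a sufficiently deep truncation. That architecture is correct, and the observation that $\kappa_\ell(t_{ij}^{(r)})=0$ for $r>\ell$ (empty sum in (\ref{kpimg})) correctly reduces (1) to the injectivity of $\bar\kappa_\ell$.

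Two steps need repair. First, your loop-filtration computation breaks at $t=0$: there $t_{hj}^{(0)}=\delta_{hj}$ is a scalar, so the term $\delta_{hj}\,t_{ik}^{(r+s-1)}$ has loop degree $r+s-2$, which equals (not is less than) the a priori degree of the left-hand side of (\ref{RTT}). Hence $\gr Y_{m|n}$ for the loop filtration is \emph{not} supercommutative; it is a quotient of the enveloping algebra of the current superalgebra $\gl_{m|n}[x]$. Either work with that structure (straightening still goes through by induction on degree and length, since the top-degree part of a supercommutator of generators is a single generator rather than a product), or use the canonical filtration $\deg t_{ij}^{(r)}=r$, for which every term on the right of (\ref{RTT}) has degree $r+s-1<r+s$ and the associated graded algebra genuinely is supercommutative --- this is exactly the filtration the paper introduces at the end of Section 2. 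Second, in the independence step, ``pairwise distinct top-lex leading monomials'' of the symbols of $\kappa_\ell(t_{ij}^{(r)})$ does not by itself give injectivity of $\gr\bar\kappa_\ell$: distinct generators with distinct leading monomials can still satisfy a polynomial relation. What is needed is that the leading monomials are themselves algebraically independent (their exponent vectors linearly independent), so that the leading monomial of an ordered supermonomial in the generators determines that supermonomial; with an order prioritizing the superscripts $[s]$, the terms of (\ref{kpimg}) with $s_k=k$ are triangular in $r$ and such an independence argument can be completed, but this --- signs and the ordering of the intermediate indices $i_1,\dots,i_{r-1}$ included --- is the substantive content of Gow's proof, and you have deferred it rather than supplied it. Note also that you cannot shortcut it via the dimension count in Corollary \ref{dimcoro}, since that corollary is itself deduced from the present proposition.
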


 As a consequence, we may identify $Y_{m|n}^{\ell}$ with the image of $Y_{m|n}$ under the map $\kappa_\ell$. In particular, the induced homomorphism
\begin{equation}\label{kappainj}
\kappa_\ell:Y_{m|n}^{\ell}\rightarrow U(\gl_{m|n})^{\otimes \ell}
\end{equation}
is injective.

Define the canonical filtration of $Y_{m|n}$ 
\[
F_0Y_{m|n}\subseteq F_1Y_{m|n}\subset F_2Y_{m|n}\subseteq \cdots
\]
by deg $t_{ij}^{(r)}:=r$, i.e., $F_dY_{m|n}$ is the span of all supermonomials in the generators $t_{ij}^{(r)}$ of total degree $\leq$ d. It is clear from (\ref{RTT}) that the associated graded algebra $\gr Y_{m|n}$ is supercommutative. We also obtain the canonical filtration on $Y_{m|n}^{\ell}$ induced from the natural quotient map $Y_{m|n}\twoheadrightarrow Y_{m|n}^{\ell}$. 

\section{Finite $W$-superalgebras}
Let $M$ and $N$ be non-negative integers and let $\mathfrak{g}=\glMN$. Every elements of $\g$ in the context are considered $\mathbb{Z}_2$-homogeneous unless mentioned specifically. Recall that $\mathfrak{g}$ acts on $\mathbb{C}^{M|N}$ via the standard representation, which we will denote by $\psi:\mathfrak{g}\rightarrow \End(\mathbb{C}^{M|N})$. Let $(\, \cdot \, ,\,\cdot\,)$ denote the non-degenerate even supersymmetric invariant bilinear form on $\g$ defined by 
\begin{equation}\label{strform}
(x,y):=str\big(\psi(x)\circ\psi(y)\big),\,\, \forall x,y \in \g,
\end{equation}
where $str$ means the super trace. See \cite{CW,FSS2,Ka} for more comprehensive studies about Lie superalgebras.

\begin{remark}
There exists another well-known even supersymmetric invariant bilinear form on $\mathfrak{g}$, called the {\em Killing form}, defined by 
\begin{equation*}
<x,y>:= str\big( \Psi(x)\circ \Psi(y) \big), \,\,\forall x,y\in\mathfrak{g},
\end{equation*}
where $\Psi:\mathfrak{g}\rightarrow \End \mathfrak{g}$ denotes the adjoint representation of $\mathfrak{g}$. The Killing form is non-degenerate for all $M\neq N$ and it plays an important rule in the classification of simple Lie superalgebras \cite{Ka}; however, it is degenerate if $M=N$ and so it is not suitable for our purpose. Note that the form (\ref{strform}) is indeed non-degenerate even when $M=N$.
\end{remark}

Let $e$ be an even nilpotent element in $\g$. It can be shown (cf. \cite{Wa,Ho}) that there exists (not uniquely) a semisimple element $h\in\g$ such that $\ad h:\g\rightarrow\g$ gives a {\em good $\mathbb{Z}$-grading of $\g$ for $e$}, which means the following conditions are satisfied:
\begin{enumerate}
\item[(1)] $\ad h(e)=2e$,
\item[(2)] $\g=\bigoplus_{j\in\mathbb{Z}} \g(j)$, where $\g(j):=\{x\in\g|\ad h(x)=jx\}$,
\item[(3)] the center of $\g$ is contained in $\g(0)$,
\item[(4)] $\ad e:\g(j)\rightarrow\g(j+2)$ is injective for all $j\leq -1$,
\item[(5)] $\ad e:\g(j)\rightarrow\g(j+2)$ is surjective for all $j\geq -1$.
\end{enumerate}
In this article we only care about the case where the grading is {\em even}; that is, we always assume that $\g(i)=0$ for all $i\notin 2\mathbb{Z}$.

Define the nilpotent subalgebras of $\g$ as follows:
\begin{equation}\label{mpdef}
\mathfrak{p}:=\bigoplus_{j\geq 0}\g(j), \quad \mathfrak{m}:=\bigoplus_{j<0}\g(j).
\end{equation}
Let $\chi\in\g^*$ be defined by $\chi(y):=(y,e)$, for all $y\in\g$. Then the restriction of $\chi$ on $\mathfrak{m}$ gives a one dimensional $U(\mathfrak{m})$-module. Let $I_\chi$ denote the left ideal of $U(\g)$ generated by $\{a-\chi(a)|a\in\mathfrak{m}\}$. By PBW theorem of $U(\g)$, we have $U(\g)=U(\mathfrak{p})\oplus I_\chi$. Let $\pr_\chi:U(\g)\rightarrow U(\mathfrak{p})$ be the natural projection and we can identify $U(\g)/I_\chi \cong U(\mathfrak{p})$. Furthermore we define the $\chi$-twisted action of $\mathfrak{m}$ on $U(\mathfrak{p})$ by  \[a\cdot y := \pr_\chi([a,y]) \text{ for all }a\in\mathfrak{m} \text{ and } y\in U(\mathfrak{p}).\] An element $y\in U(\mathfrak{p})$ is said to be $\mathfrak{m}$-invariant if $a\cdot y=0$ for all $a\in\mathfrak{m}$.

The {\em finite} W{\em -superalgebra} (which we will omit the term ``finite" from now on) is defined to be the subspace of $\mathfrak{m}$-invariants of $U(\mathfrak{p})$ under the $\chi$-twisted action; that is,
\begin{align*}
\mathcal{W}_{e,h}:=U(\mathfrak{p})^\mathfrak{m}=&\{y\in U(\mathfrak{p})| \pr_\chi ([a,y])=0, \forall a\in\mathfrak{m}\}\\
=&\{y\in U(\mathfrak{p})| \big(a-\chi(a)\big)y\in I_\chi, \forall a\in\mathfrak{m}\}.
\end{align*}
At this point, the definition of $W$-superalgebra depends on the nilpotent element $e$ and a semisimple element $h$ which gives a good $\mathbb{Z}$-grading for $e$.

\begin{example}
If we take the nilpotent element $e=0$, then $\chi=0$, $\g=\g(0)=\mathfrak{p}$, $\mathfrak{m}=0$, $\mathcal{W}_{e,h}=U(\mathfrak{p})=U(\g)$.
\end{example}

Now we introduce certain combinatorial objects called {\em $(m,n)$-colored rectangles} \cite{Ho, BBG} (which are in fact special cases of the so called {\em pyramids}). These objects provide a diagrammatic way to record the information needed to define $W$-superalgebras.

Let $\pi$ be a rectangular Young diagram with $m+n$ boxes as its height and $\ell$ boxes as its base. We choose arbitrary $m$ rows and color the boxes in these rows by $+$, while we color the other $n$ rows by $-$. Such a diagram is called an ($m,n$)-{\em colored rectangle}, or a {\em rectangle} for short. For example,
\[
\pi={\begin{picture}(90, 65)%
\put(15,-10){\line(1,0){60}}
\put(15,5){\line(1,0){60}}
\put(15,20){\line(1,0){60}}
\put(15,35){\line(1,0){60}}
\put(15,50){\line(1,0){60}}
\put(15,65){\line(1,0){60}}
\put(15,-10){\line(0,1){75}}
\put(30,-10){\line(0,1){75}}
\put(45,-10){\line(0,1){75}}
\put(60,-10){\line(0,1){75}}
\put(75,-10){\line(0,1){75}}
\put(18,-5){$-$}\put(33,-5){$-$}\put(48,-5){$-$}\put(63,-5){$-$}
\put(18,10){$-$}\put(33,10){$-$}\put(48,10){$-$}\put(63,10){$-$}
\put(18,25){$+$}\put(33,25){$+$}\put(48,25){$+$}\put(63,25){$+$}
\put(18,40){$-$}\put(33,40){$-$}\put(48,40){$-$}\put(63,40){$-$}
\put(18,55){$+$}\put(33,55){$+$}\put(48,55){$+$}\put(63,55){$+$}
\end{picture}}\\[4mm]
\]

Set $M=m\ell$ and $N=n\ell$. We enumerate the \begin{young}(+)\end{young} boxes by the numbers $\{\pa{1},\ldots, \pa{M}\}$ down columns from left to right, and enumerate the \begin{young}(-)\end{young} boxes by the numbers $\{1,\ldots,N\}$ in the same fashion. In fact, we may enumerate the boxes by an arbitrary order as long as the parities are preserved so we just choose the easiest way according to our purpose. Moreover, we image that each box of $\pi$ is of size $2\times 2$ and $\pi$ is built on the $x$-axis where the center of $\pi$ is exactly on the origin. For example,
\begin{equation}\label{exp}
\pi={\begin{picture}(90, 110)%
\put(15,-10){\line(1,0){80}}
\put(15,10){\line(1,0){80}}
\put(15,30){\line(1,0){80}}
\put(15,50){\line(1,0){80}}
\put(15,70){\line(1,0){80}}
\put(15,90){\line(1,0){80}}
\put(15,-10){\line(0,1){100}}
\put(35,-10){\line(0,1){100}}
\put(55,-10){\line(0,1){100}}
\put(75,-10){\line(0,1){100}}
\put(95,-10){\line(0,1){100}}
\put(23,-3){$3$}\put(43,-3){$6$}\put(63,-3){$9$}\put(80,-3){$12$}
\put(23,16){$2$}\put(43,16){$5$}\put(63,16){$8$}\put(80,16){$11$}
\put(23,35){$\pa{2}$}\put(43,35){$\pa{4}$}\put(63,35){$\pa{6}$}\put(82,35){$\pa{8}$}
\put(23,55){$1$}\put(43,55){$4$}\put(63,55){$7$}\put(80,55){$10$}
\put(23,75){$\pa{1}$}\put(43,75){$\pa{3}$}\put(63,75){$\pa{5}$}\put(82,75){$\pa{7}$}
\put(0,-20){\line(1,0){110}}
\put(53,-23){$\bullet$}
\put(63,-35){$1$}\put(83,-35){$3$}
\put(35,-35){$-1$}\put(15,-35){$-3$}
\end{picture}}\\[10mm]
\end{equation}

We now explain how to read off an even nilpotent $e(\pi)$ and a semisimple $h(\pi)$ in $\g=\glMN$ which gives a good $\mathbb{Z}$-grading of $\g$ for $e$ from a given rectangle $\pi$. Let $J=\{\pa{1}<\ldots<\pa{M}<1<\ldots<N\}$ be an ordered index set and let $\{e_i|i\in J\}$ be a basis of $\mathbb{C}^{M|N}$. We identify $\glMN\cong$ End $(\mathbb{C}^{M|N})$ with the $(M+N)\times(M+N)$ matrices over $\mathbb{C}$ by this basis of $\mathbb{C}^{M|N}$ with respect to the order $e_i<e_j$ if $i<j$ in $J$. 

Define the element 
\begin{equation}\label{edef}
e(\pi):=\sum_{\substack{i,j\in J}}e_{i,j}\in \glMN,
\end{equation}
summing over all adjacent pairs $\young(ij)$ of boxes in $\pi.$ It is clear that such an element $e(\pi)$ is even nilpotent.

Let $\widetilde{\text{col}}(i)$ denote the $x$-coordinate of the box numbered with $i\in J$. For instance, in our example (\ref{exp}),  
$\widetilde{\text{col}}(\pa{1})=-3$ and $\widetilde{\text{col}}(8)=1$.
Then we define the following diagonal matrix
\begin{equation}\label{hdef}
h(\pi):=-\text{diag}\big(\widetilde{\text{col}}(\pa{1}), \ldots, \widetilde{\text{col}}(\pa{M}), \widetilde{\text{col}}(1),\ldots, \widetilde{\text{col}}(N)\big)\in\g.
\end{equation}
One may check directly that $\ad h(\pi)$ gives a good $\mathbb{Z}$-grading of $\g$ for $e(\pi)$. 

\begin{remark}\label{choiceofh}
In general, there are other even good $\mathbb{Z}$-gradings for $e$. However, if our $e=e(\pi)$ is obtained by a rectangle $\pi$ according to (\ref{edef}), then such a grading is unique (cf. \cite[Theorem 7.2]{Ho}).
\end{remark}

Now we characterize those $e$ obtained from (\ref{edef}) for some rectangle $\pi$. Consider 
\begin{equation}\label{edecomp}
e=e_M\oplus e_N\in \End \mathbb{C}^{M|N},
\end{equation}
where $e_M$ and $e_N$ are the restrictions of $e$ to $\mathbb{C}^{M|0}$ and $\mathbb{C}^{0|N}$, respectively. Let $\mu=(\mu_1,\mu_2,\ldots)$ and $\nu=(\nu_1,\nu_2,\ldots)$ be the partitions representing the Jordan type of $e_M$ and $e_N$, respectively.

\begin{definition}
An element $e$ is called {\em rectangular} if it is even nilpotent and the Jordan blocks of $e_M$ and $e_N$ are all of the same size $\ell$, i.e., $\mu=(\,\overbrace{\ell,\ldots,\ell}^{m-copies}\,)$ and $\nu=(\,\overbrace{\ell,\ldots,\ell}^{n-copies}\,)$ for some  for some non-negative integers $\ell,m,n$.
\end{definition}

Clearly, $e$ is of the form (\ref{edef}) for some rectangle $\pi$ if and only if $e$ is rectangular. Assume now $e$ is rectangular. We define a new partition $\la$ by collecting all parts of $\mu$ and $\nu$ together and reorder them by an arbitrary order. Since all the parts are the same number $\ell$, we use $\pa{\ell}$ to denote the parts obtained from $\mu$.

For example, consider $\gl_{8|12}$, $\mu=(\pa{4},\pa{4})$ and $\nu=(4,4,4)$. Then one possible $\la$ is $\la=(\pa{4},4,\pa{4},4,4)$.

Next we read off an $\ep$-$\del$ sequence $\mb$ from $\la:$ if the $i$-th position of $\la$ is $\ell$ (respectively, $\pa{\ell}$), then the $i$-th position of $\mb$ is $\ep$ (respectively, $\del$). For example, the $\la$ above corresponds to the $\ep$-$\del$ sequence $\mb=\del\ep\del\ep\ep$. 

Then we color the rectangle of height $m+n$ base $\ell$ with respect to $\mb$: we color the $i$-th row of $\pi$ by + (respectively, $-$) if the $i$-th position of $\mb$ is $\del$ (respectively, $\ep$) where the rows are counted from top to bottom. After coloring the rows, we enumerate the boxes of $\pi$ by exactly the same fashion explained in the paragraph before (\ref{exp}).

Therefore, we have a bijection between the set of $(m,n)$-colored rectangles of base $\ell$ and the set of pairs $(e,\mb)$ where $e$ is a rectangular element in $\gl_{m\ell|n\ell}$ and $\mb$ is an $\ep$-$\del$ sequence containing exactly $m$ $\del$'s and $n$ $\ep$'s.

Let $\pi$ be a fixed $(m,n)$-colored rectangle and $e(\pi)$ denote the rectangular element associates to $\pi$. We will denote by $\mathcal{W}_\pi:=\mathcal{W}_{e(\pi)}$ the $W$-superalgebra associated to $e(\pi)$. Note that we may omit $h(\pi)$ in our notation with no ambiguity due to Remark \ref{choiceofh}.

\begin{remark}\label{Wind}
An interesting observation is that $\W_{\pi}$ is independent of the choices of the sequence $\mb$ because any other sequence $\mb^\prime$ yields to the same $e$ and hence the same $W$-superalgebra. This is parallel to the fact that $Y_{m|n}^\ell$ is independent of the choice of $\mb$.
\end{remark}

Now we label the columns of $\pi$ from left to right by $1,\ldots, \ell$, and for any $i\in J$ we let col($i$) denote the column where $i$ appears. Define the {\em Kazhdan filtration} of $U(\glMN)$
\[
\cdots\subseteq F_dU(\glMN) \subseteq F_{d+1}U(\glMN)\subseteq \cdots
\] 
by declaring 
\begin{equation}\label{degdef}
\deg (e_{i,j}):= \text{col}(j)-\text{col}(i)+1
\end{equation}
for each $i,j \in J$ and $F_dU(\glMN)$ is the span of all supermonomials $e_{i_1,j_1}\cdots e_{i_s,j_s}$ for $s\geq 0$ and $\sum_{k=1}^s$ deg $(e_{i_k,j_k})\leq d$. Let $\gr U(\glMN)$ denote the associated graded superalgebra. The natural projection $\glMN\twoheadrightarrow\mathfrak{p}$ induces a grading on $\W_\pi$. 

On the other hand, let $\g^e$ denote the centralizer of $e$ in $\g=\glMN$ and let $S(\g^e)$ denote the associated supersymmetric algebra. We define the Kazhdan filtration on $S(\g^e)$ by the same setting (\ref{degdef}). The following proposition was observed in \cite{Zh}, where the mild assumption on $e$ there is satisfied when $e$ is rectangular.

\begin{proposition}\label{dimprop}\cite[Remark 3.9]{Zh}
$\gr \W_{\pi}\cong S(\g^e)$ as Kazhdan graded superalgebras.
\end{proposition}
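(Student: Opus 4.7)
The plan is to follow the standard Premet-style argument, adapted to the super setting as in Zhao's treatment \cite{Zh}: construct a natural injection $\gr \W_\pi \hookrightarrow S(\mathfrak{p})^{\mathfrak{m}}$, identify the target with $S(\g^e)$ via a super analog of Kostant's slice theorem, and finally upgrade the injection to an isomorphism by lifting a basis of $\g^e$ into $\W_\pi$. First, the Kazhdan filtration on $U(\glMN)$ restricts to $U(\mathfrak{p})$, and PBW gives $\gr U(\mathfrak{p}) \cong S(\mathfrak{p})$ as Kazhdan graded superalgebras, so the inclusion $\W_\pi \hookrightarrow U(\mathfrak{p})$ yields an injection $\gr \W_\pi \hookrightarrow S(\mathfrak{p})$. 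Its image lies in $S(\mathfrak{p})^{\mathfrak{m}}$ for the adjoint action of $\mathfrak{m}$ on $\mathfrak{p} \cong \g/\mathfrak{m}$: for $a \in \g(-2j)$ with $j \geq 1$, the Kazhdan degree of $a$ is $1-j$, while $\chi(a) = (a, e)$ is nonzero only when $j = 1$, in which case it sits in Kazhdan degree $0$ matching $a$, so the $\chi$-shift disappears in the associated graded.

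For the identification of $S(\mathfrak{p})^{\mathfrak{m}}$ with $S(\g^e)$, note that since $e = e(\pi)$ is rectangular and even it fits into an $\mathfrak{sl}_2$-triple $(e, h(\pi), f)$ inside $\gev$. Applying $\mathfrak{sl}_2$ representation theory separately to the even and odd parts of $\g$ yields the direct sum decomposition $\g = \g^e \oplus [f, \g]$, whence $\mathfrak{p} = \g^e \oplus ([f, \g] \cap \mathfrak{p})$. Dualizing via the invariant form \eqref{strform} and exploiting the surjectivity of $\ad e$ on non-negative graded pieces (condition (5) of the good grading), restriction of polynomial functions then induces a Kazhdan graded isomorphism $S(\mathfrak{p})^{\mathfrak{m}} \cong S(\g^e)$. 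The rectangular hypothesis together with Remark \ref{choiceofh} (uniqueness of the good grading for rectangular $e$) keeps the dimension count clean and purely even, avoiding complications that arise for general nilpotents in the super setting.

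The main obstacle is the last step: upgrading the injection $\gr \W_\pi \hookrightarrow S(\g^e)$ to an equality. For each $x \in \g^e$, the plan is to construct a lift $\tilde x \in \W_\pi$ whose Kazhdan symbol is $x$ by an inductive correction procedure. Starting from $x \in \mathfrak{p} \subseteq U(\mathfrak{p})$, the obstruction $a \cdot x = \pr_\chi([a, x])$ for $a \in \mathfrak{m}$ has strictly lower Kazhdan degree than $x$, and one uses PBW monomials of lower degree in $U(\mathfrak{p})$ together with the surjectivity supplied by the good grading to cancel each obstruction; iteration terminates because the Kazhdan filtration is bounded below. Supermonomials in the $\tilde x$ then span $\W_\pi$ modulo lower filtration by a PBW argument on the filtered superalgebra $U(\mathfrak{p})$, which combined with the injection above gives the desired isomorphism. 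Sign conventions from the supercommutator must be tracked carefully throughout this lifting, and this is where the argument genuinely departs from Premet's original.
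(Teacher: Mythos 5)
The paper does not actually prove this proposition: it is quoted directly from \cite[Remark 3.9]{Zh}, with only the observation that the hypothesis on $e$ there is satisfied when $e$ is rectangular. So your proposal is necessarily a different route, and the question is whether your sketch of the Premet--Gan--Ginzburg argument is sound. The overall architecture (inject $\gr\W_\pi$ into the associated graded of $U(\mathfrak p)$, identify the target with $S(\g^e)$ via an $\mathfrak{sl}_2$-triple and the invariant form, then lift a basis of $\g^e$ to force surjectivity) is indeed the standard and correct one, and is essentially what Zhao carries out.

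There is, however, a genuine error in your second step. You claim the image of $\gr\W_\pi$ lies in $S(\mathfrak p)^{\mathfrak m}$ for the \emph{plain} adjoint action because ``the $\chi$-shift disappears in the associated graded.'' Your own degree count shows the opposite: for $a\in\g(-2)$ the constant $\chi(a)$ sits in the \emph{same} Kazhdan degree as $a$ (namely $0$), so the shift survives passage to the associated graded. The correct target is the invariant ring of the $\chi$-twisted (affine) action, i.e.\ $\C[\chi+\mathfrak m^{\perp}]$ with the coadjoint action of the unipotent group $M$, and it is this ring that a Kostant-type transversality theorem identifies with $S(\g^e)$. The plain invariants $S(\mathfrak p)^{\mathfrak m}$ are \emph{not} graded-isomorphic to $S(\g^e)$: already for $\gl_2$ with $e=e_{1,2}$ one computes $S(\mathfrak p)^{\mathfrak m}=\C[e_{1,1},e_{2,2}]$ with both generators in Kazhdan degree $1$, whereas $\g^e$ has basis $e_{1,1}+e_{2,2}$ in degree $1$ and $e_{1,2}$ in degree $2$; the correct twisted invariant ring is $\C[\,e_{1,1}+e_{2,2},\; e_{1,2}-e_{1,1}e_{2,2}\,]$, which does match. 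Relatedly, your final lifting step is asserted rather than proved: the claim that every obstruction $\pr_\chi([a,x])$ can be cancelled by lower-order corrections is exactly the vanishing of the relevant degree-one Lie superalgebra cohomology (equivalently, the super analogue of Kostant's isomorphism $M\times(e+\g^f)\rightarrow\chi+\mathfrak m^{\perp}$), which is the heart of the matter in \cite{Zh} and is not supplied merely by surjectivity of $\ad e$ from the good grading.
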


The following proposition is a well-known result about $\g^e$. As remarked in \cite{BBG}, the result is similar to the Lie algebra case $\gl_{M+N}$ since $e$ is even.
\begin{proposition}\label{counting2}
Let $\pi$ be an (m,n)-colored rectangle and $e=e(\pi)$ be the associated rectangular nilpotent element. For all $1\leq i,j\leq m+n$ and $r>0$, define
\[
c_{i,j}^{(r)}:=
\sum_{\substack{1\leq h,k\leq m+n \\ \row(h)=i, \row(k)=j\\ \col(k)-\col(h)=r-1}} (-1)^{|i|}e_{h,k}\in \g=\glMN.
\]
Then the set of vectors $\lbrace c_{i,j}^{(r)}|1\leq i,j\leq m+n, 1\leq r\leq \ell \rbrace$ forms a basis for $\g^e$.
\end{proposition}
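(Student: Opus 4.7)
The plan is to verify three things: that each $c_{i,j}^{(r)}$ actually lies in $\g^e$, that the collection is linearly independent, and that the count matches $\dim \g^e$. The three steps together give a basis.

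First I would compute $[e, c_{i,j}^{(r)}]$ directly from the supercommutator formula in $\glMN$. Writing $e = e(\pi) = \sum_{(a,b)} e_{a,b}$ over horizontally adjacent pairs in $\pi$, every summand has $\row(a)=\row(b)$, so $|a|=|b|$ and $e_{a,b}$ is even; hence $[e_{a,b},e_{h,k}] = \del_{bh}e_{a,k} - \del_{ak}e_{h,b}$ with no sign twist. Expanding $[e, e_{h,k}] = e_{h^-,k} - e_{h,k^+}$, where $h^-$ (resp.\ $k^+$) denotes the box immediately to the left of $h$ (resp.\ right of $k$) in $\pi$, or $0$ if absent. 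Summing over the pairs defining $c_{i,j}^{(r)}$ produces two sums of elementary matrices $e_{h',k}$ indexed by triples satisfying $\row(h')=i$, $\row(k)=j$, $\col(k)-\col(h')=r$; because $\pi$ is a solid rectangle and $1 \le r \le \ell$, the boundary-of-$\pi$ exclusions are automatic, so both index sets agree and the two contributions cancel. Hence $[e,c_{i,j}^{(r)}]=0$.

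Next I would handle linear independence. The triple $(i,j,r)$ determines $\row(h)$, $\row(k)$, and $\col(k)-\col(h)$ for every elementary matrix $e_{h,k}$ appearing in $c_{i,j}^{(r)}$, so distinct triples contribute disjoint subsets of the PBW basis $\{e_{h,k}\}$ of $\g$. In addition, each $c_{i,j}^{(r)}$ is a nonzero $\pm 1$-combination (there are exactly $\ell-r+1$ summands), so the $(m+n)^2\ell$ vectors $c_{i,j}^{(r)}$ are linearly independent in $\g$.

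Finally, I would verify $\dim \g^e = (m+n)^2\ell$. Since $e$ is even, $\g^e$ decomposes according to parity. The even part splits as $(\gl_M)^{e_M}\oplus(\gl_N)^{e_N}$; since $e_M$ and $e_N$ are nilpotents of rectangular Jordan type $(\ell^m)$ and $(\ell^n)$, standard formulas give dimensions $m^2\ell$ and $n^2\ell$ respectively. The odd part consists of linear maps $\C^{0|N}\to\C^{M|0}$ and $\C^{M|0}\to\C^{0|N}$ intertwining $e_N$ with $e_M$, each of dimension $mn\ell$ by the same rectangular Jordan structure. Summing yields $m^2\ell+n^2\ell+2mn\ell=(m+n)^2\ell$, matching the number of $c_{i,j}^{(r)}$.

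The only nontrivial step is the centralizer cancellation in the first paragraph; linear independence is immediate from the disjoint-support observation, and the dimension count is a direct application of the Jordan-type formula once one notes $e$ decomposes as a pure even plus pure odd centralizer problem. A subtle point to state cleanly is why the boundary terms drop out, namely that for $1\le r\le\ell$ the constraint $\col(k)-\col(h')=r$ forces both $h'$ and $k$ to lie strictly inside the admissible range.
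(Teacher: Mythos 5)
Your proof is correct and complete. The paper itself offers no argument for this proposition --- it is stated as a well-known fact, with a pointer to the analogous computation for $\gl_{M+N}$ and to \cite{BBG} --- so there is no route to compare against; your three steps (the cancellation $[e,e_{h,k}]=e_{h^-,k}-e_{h,k^+}$ summed over the defining index set, disjointness of supports for linear independence, and the Jordan-type count $\sum_{i,j}\min(\mu_i,\mu_j)+\sum_{i,j}\min(\nu_i,\nu_j)+2\sum_{i,j}\min(\mu_i,\nu_j)=(m+n)^2\ell$) are exactly the standard argument the author is implicitly invoking, and the point you flag about boundary terms dropping out because $1\le r\le\ell$ keeps $\col(h')$ and $\col(k)$ inside $\{1,\dots,\ell\}$ is handled correctly.
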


\begin{corollary}\label{dimcoro}
Consider $Y_{m|n}^{\ell}$ with the canonical filtration and $S(\g^e)$ with the Kazhdan filtration. Let $F_dY_{m|n}^{\ell}$ and $F_dS(\g^e)$ denote the associated filtered algebras, respectively. Then for each $d\geq 0$, we have $\dim F_dY_{m|n}^{\ell} = \dim F_dS(\g^e)$.
\end{corollary}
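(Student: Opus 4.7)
The plan is to reduce the equality of filtered dimensions to an equality of graded dimensions on the associated graded algebras, and then to read off both graded dimensions from PBW-type theorems (Proposition \ref{PBWSY} on one side, Proposition \ref{counting2} on the other). Concretely, since $\dim F_d = \sum_{r \leq d} \dim \gr_r$ for both filtrations, it is enough to show that $\dim \gr_r Y_{m|n}^\ell = \dim \gr_r S(\g^e)$ for every $r \geq 0$.

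On the Yangian side, inspect the RTT relations (\ref{RTT}): the right-hand side is a sum of products of total canonical degree $r+s-1$, whereas the left-hand side is a supercommutator of elements of degrees $r$ and $s$. This forces $\gr Y_{m|n}$ to be supercommutative, and the same is inherited by the quotient $\gr Y_{m|n}^\ell$. Combined with the PBW statement of Proposition \ref{PBWSY}(2), this identifies $\gr Y_{m|n}^\ell$ with the free supercommutative superalgebra on generators $\bar{t}_{ij;\mb}^{(r)}$, $1 \leq i,j \leq m+n$, $1 \leq r \leq \ell$, where $\bar{t}_{ij;\mb}^{(r)}$ carries canonical degree $r$ and parity $|i|+|j|$.

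On the centralizer side, Proposition \ref{counting2} exhibits $\{c_{i,j}^{(r)} \mid 1 \leq i,j \leq m+n,\ 1 \leq r \leq \ell\}$ as a basis of $\g^e$, so $S(\g^e)$ is the free supercommutative superalgebra on these generators. Each summand $e_{h,k}$ in the defining sum for $c_{i,j}^{(r)}$ satisfies $\col(k)-\col(h)+1 = r$, so by (\ref{degdef}) the Kazhdan degree of $c_{i,j}^{(r)}$ is exactly $r$. Its parity, determined by the coloring of the rows containing $h$ and $k$, is $|i|+|j|$.

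The two indexing sets are thus identical, and the matching of degree and parity shows that the Hilbert (super)series of $\gr Y_{m|n}^\ell$ and $S(\g^e)$ coincide, giving the desired equality $\dim F_d Y_{m|n}^\ell = \dim F_d S(\g^e)$ for all $d$. There is no serious obstacle here; the only nontrivial point is verifying the compatibility of the two filtrations (canonical versus Kazhdan) at the level of generators, which is done by the degree count above, and confirming that $\gr Y_{m|n}^\ell$ really is free supercommutative, which is immediate from the RTT relations together with Proposition \ref{PBWSY}(2).
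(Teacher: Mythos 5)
Your argument is correct and is essentially the paper's own proof, which is stated in one line as following from Proposition \ref{PBWSY}, Proposition \ref{counting2} and induction on $d$ --- i.e., exactly the monomial count you carry out, matching generators by index set, canonical/Kazhdan degree, and parity. Your version merely makes explicit the degree and parity bookkeeping that the paper leaves to the reader.
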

\begin{proof}
Follows from Proposition \ref{PBWSY}, Proposition \ref{counting2} and induction on $d$.
\end{proof}

\section{Isomorphism between $Y_{m|n}^{\ell}$ and $\W_\pi$}
Let $\pi$ be a given $(m,n)$-colored rectangle with base $\ell$ and $\mb$ be the $\ep$-$\del$ sequence determined by the colors of rows of $\pi$. We now define some elements in $U(\mathfrak{p})$. It turns out later that they are $\mathfrak{m}$-invariant, i.e., belong to $\W_\pi$.

For each $1\leq r\leq \ell$, set 
\begin{equation}\label{rhodef}
\rho_r := -(\ell-r)(m-n).
\end{equation}
For $1\leq i,j\leq m+n$, define
\begin{equation}\label{etil}
\tilde e_{i,j} := (-1)^{\col(j)-\col(i)} 
(e_{i,j} + \delta_{ij} (-1)^{|i|}\rho_{\col(i)}),
\end{equation}
where $|i|$ is determined by $\mb$ as in (\ref{parity}). 

One may check that \begin{multline}\label{etilrel}
[\tilde e_{i,j}, \tilde e_{h,k}]
=(\tilde{e}_{i,k} - \delta_{ik} (-1)^{|i|} \rho_{\col(i)})\delta_{hj}\\
- (-1)^{(|i|+|j|)(|h|+|k|)}
\delta_{i,k} (\tilde e_{h,j} - \delta_{hj} (-1)^{|j|}\rho_{\col(j)}).
\end{multline}
Also, for any $1\leq i,j\leq m+n$, we have
\begin{equation}\label{chidef}
\chi (\tilde e_{i,j}) = \left\{
\begin{array}{ll}
(-1)^{|i|+1}&\hbox{if $\row(i)=\row(j)$ and $\col(i) = \col(j)+1$;}\\
0&\hbox{otherwise.}
\end{array}\right.
\end{equation}

For $1\leq i,j\leq m+n$,
we let
$t_{ ij;\mb}^{(0)} := \delta_{i,j}$
and for $1\leq r \leq \ell$, define
\begin{equation}\label{tdef}
t_{ij;\mb}^{(r)}
:=
\sum_{s = 1}^r
\sum_{\substack{i_1,\dots,i_s\\j_1,\dots,j_s}}
(-1)^{|i_1|+\cdots+|i_s|}
 \tilde e_{i_1,j_1} \cdots \tilde e_{i_s,j_s}
\end{equation}
where the second sum is over all $i_1,\dots,i_s,j_1,\dots,j_s\in J$
such that
\begin{itemize}
\item[(1)] $\deg(e_{i_1,j_1})+\cdots+\deg(e_{i_s,j_s}) = r$;
\item[(2)] $\col(i_t) \leq \col(j_t)$ for each $t=1,\dots,s$;
\item[(3)] $\col(j_t) < \col(i_{t+1})$ for each $t=1,\dots,s-1$;
\item[(4)] $\row(i_1)=i$, $\row(j_s) = j$;
\item[(5)]
$\row(j_t)=\row(i_{t+1})$ for each $t=1,\dots,s-1$.
\end{itemize}
First note that these elements depends on the choice of $\mb$. Also, the restrictions (1) and (2) imply that $t_{ij;\mb}^{(r)}$ belongs to $\mathrm{F}_r U(\mathfrak p)$ with respect to the Kazhdan grading. Define the following series for all $1\leq i,j\leq m+n$:
\begin{equation}\label{tseries}
t_{ij;\mb}(u) := \sum_{r = 0}^\ell t_{ij;\mb}^{(r)} u^{-r}
\in U(\mathfrak p) [[u^{-1}]].
\end{equation}

Now we prove that these distinguished elements in $U(\mathfrak{p})$ are in fact $\mathfrak{m}$-invariant under the $\chi$-twisted action. Let $T(Mat_\ell)$ be the tensor algebra of the $\ell\times \ell$ matrices space over $\mathbb{C}$ and $\g=\glMN$ where $M=m\ell$, $N=n\ell$.
For all $1\leq i,j\leq m+n$, define a $\mathbb{C}$-linear map $t_{ij;\mb}:T(Mat_\ell)\rightarrow U(\g)$ inductively by
\[
t_{ij;\mb}(1):=\del_{i,j},\qquad\quad t_{ij;\mb}(e_{a,b}):=(-1)^{|i|}e_{i\star a,j\star b},
\] 
\begin{equation}\label{uptij1}
t_{ij;\mb}(x_1\otimes x_2\otimes\ldots \otimes x_r):= \sum_{1\leq i_1,i_2,\ldots,i_{r-1}\leq m+n} t_{ii_1;\mb}(x_1)t_{i_1i_2;\mb}(x_2)\cdots t_{i_{r-1}j;\mb}(x_r),
\end{equation}
for $1\leq a,b\leq \ell$, $r\geq 1$ and $x_1,\ldots,x_r\in Mat_\ell$, where $i\star a$ is defined to be the number in the $(i,a)$-th position of $\pi$, where we label the rows and columns from top to bottom and from left to right. For example, let $\pi$ be the rectangle in (\ref{exp}), then $t_{23;\mb}(e_{2,4})=(-1)^{|2|}e_{2\star 2, 3\star 4}=-e_{4,\pa{8}}$. For an indeterminate $u$, we extend the scalars from $\mathbb{C}$ to $\mathbb{C}[u]$ in the obvious way.

\begin{lemma}
For each $1\leq i,j,h,k\leq m+n$ and $x,y_1,\ldots, y_r\in Mat_\ell$, we have
\begin{align}\notag
[t_{ij;\mb}(x),t_{hk;\mb}(y_1\otimes\cdots\otimes y_r)]&=\\
\notag (-1)^{|i|\,|j|+|i|\,|h|+|j|\,|h|}\big(& \sum_{s=1}^r t_{hj;\mb}(y_1\otimes\cdots\otimes y_{s-1})t_{ik;\mb}(xy_s\otimes\cdots \otimes y_r)\\
&\,-t_{hj;\mb}(y_1\otimes\cdots\otimes y_{s}x)t_{ik;\mb}(y_{s+1}\otimes\cdots\otimes y_{r})
\big),\label{detcomp1}
\end{align}
where the products $xy_s$ and $y_sx$ on the right are the matrix products in $Mat_\ell$.
\end{lemma}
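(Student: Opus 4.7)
The plan is to prove the identity by induction on $r$.

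For the base case $r=1$, I would reduce by $\mathbb{C}$-bilinearity to the case $x = e_{a,b}$ and $y_1 = e_{c,d}$, where $t_{ij;\mb}(x) = (-1)^{|i|} e_{i \star a,\, j \star b}$ and $t_{hk;\mb}(y_1) = (-1)^{|h|} e_{h \star c,\, k \star d}$ are single elements of $\g = \glMN$. Their supercommutator is then computed directly from the relation
\begin{equation*}
[e_{\alpha,\beta}, e_{\gamma,\delta}] \;=\; \delta_{\beta\gamma}\, e_{\alpha,\delta} \;-\; (-1)^{(|\alpha|+|\beta|)(|\gamma|+|\delta|)}\, \delta_{\alpha\delta}\, e_{\gamma,\beta}
\end{equation*}
in $U(\g)$. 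Using that the parity of $i \star a$ in $J$ depends only on the row, so $|i \star a| = |i|$, and that the Kronecker deltas factor as $\delta_{j \star b,\, h \star c} = \delta_{hj}\delta_{bc}$ and $\delta_{i \star a,\, k \star d} = \delta_{ik}\delta_{ad}$, the two terms on the right become $\delta_{hj}\, t_{ik;\mb}(xy_1)$ and $\delta_{ik}\, t_{hj;\mb}(y_1 x)$ up to sign. When $h=j$ the sign $(-1)^{|i||j|+|i||h|+|j||h|}$ collapses modulo $2$ to $(-1)^{|j|}$, which combines with the $(-1)^{|i|}$ built into $t_{ik;\mb}(xy_1)$ to match $(-1)^{|i|+|h|}$ on the left; an analogous simplification handles the second term when $i=k$.

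For the inductive step, I would split off the first tensor factor via
\begin{equation*}
t_{hk;\mb}(y_1 \otimes \cdots \otimes y_r) \;=\; \sum_{l=1}^{m+n} t_{hl;\mb}(y_1)\, t_{lk;\mb}(y_2 \otimes \cdots \otimes y_r)
\end{equation*}
and apply the super Leibniz rule, noting that $t_{hl;\mb}(y_1)$ is homogeneous of parity $|h|+|l|$, so the relevant sign is $(-1)^{(|i|+|j|)(|h|+|l|)}$. The base case applied to $[t_{ij;\mb}(x), t_{hl;\mb}(y_1)]$ produces the $s=1$ contribution on the right-hand side after using the collapsing identity $\sum_l t_{il;\mb}(xy_1)\, t_{lk;\mb}(y_2 \otimes \cdots \otimes y_r) = t_{ik;\mb}(xy_1 \otimes y_2 \otimes \cdots \otimes y_r)$. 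The inductive hypothesis applied to $[t_{ij;\mb}(x), t_{lk;\mb}(y_2 \otimes \cdots \otimes y_r)]$, after reindexing $s' = s-1$, supplies the contributions $s = 2, \ldots, r$, where the sums over $l$ are absorbed by $\sum_l t_{hl;\mb}(y_1)\, t_{lj;\mb}(y_2 \otimes \cdots \otimes y_{s-1}) = t_{hj;\mb}(y_1 \otimes \cdots \otimes y_{s-1})$ and the analogous identity for the $y_s x$ piece.

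The main obstacle is the sign bookkeeping in the inductive step: the Leibniz factor $(-1)^{(|i|+|j|)(|h|+|l|)}$ must combine with the inductive sign $(-1)^{|i||j|+|i||l|+|j||l|}$ to yield $(-1)^{|i||j|+|i||h|+|j||h|}$ uniformly in $l$. Expanding the exponent gives $|i||h|+|j||h|+|i||j|+2(|i||l|+|j||l|)$, in which the $|l|$-dependent cross terms come in pairs and therefore vanish modulo $2$; this is precisely what is needed for the sum over $l$ to reassemble into a single $t_{hj;\mb}$ factor without residual signs. Once this cancellation is in hand, the remainder is formal reindexing.
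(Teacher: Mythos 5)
Your proof is correct and is exactly the argument the paper intends: the paper's entire proof reads ``By (\ref{uptij1}) and induction on $r$,'' and your base case computation, your use of the recursion to split off $y_1$, and your verification that the Leibniz sign $(-1)^{(|i|+|j|)(|h|+|l|)}$ combines with the inductive sign so that the $|l|$-dependence cancels modulo $2$ supply precisely the omitted details. No gaps.
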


\begin{proof}
By (\ref{uptij1}) and induction on $r$.
\end{proof}

Introducing the following matrix $A(u)$ with entries in the algebra $T(Mat_\ell)[u]$:
$$
A(u) =
\left(
\begin{array}{ccccc}
u+e_{1,1}+\rho_1 & e_{1,2} & e_{1,3} & \cdots & e_{1,\ell}\\
1 & u+e_{2,2}+\rho_2 & e_{2,3} & &e_{2,\ell}\\
0&1&\ddots& &\vdots\\
\vdots&  & 1 & u+e_{\ell-1,\ell-1}+\rho_{\ell-1} & e_{\ell-1,\ell}\\
0&\cdots & 0 & 1 & u+e_{\ell,\ell}+\rho_\ell
\end{array}
\right).\:
$$
A key observation is that for all $1\leq i,j\leq m+n$ and $0\leq r\leq \ell$, the element $t_{ij;\mb}^{(r)}$ of $U(\mathfrak{p})$ defined by (\ref{tdef}) equals to the coefficient of the term $u^{\ell-r}$ in $t_{ij;\mb}\big(\rdet A(u)\big)$, where 
\[
\rdet A:= \sum_{\tau\in S_l}\sgn(\tau)a_{1,\tau(1)}a_{2,\tau(2)}\cdots a_{l,\tau(l)},
\]
for a matrix $A=(a_{i,j})_{1\leq i,j\leq \ell}$. We also let $A_{p,q}(u)$ stand for the submatrix of $A(u)$ consisting only of rows and columns numbered by $p,\ldots,q$.

\begin{proposition}\label{recminv}
For all $1\leq i,j\leq m+n$, $0\leq r\leq \ell$ and a fixed $\mb$, the elements $t_{ij;\mb}^{(r)}$ of $U(\mathfrak{p})$ are $\mathfrak{m}$-invariant under the $\chi$-twisted action.
\end{proposition}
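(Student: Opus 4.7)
The plan is to work directly with the generating function $T_{ij;\mb}(u) := t_{ij;\mb}(\rdet A(u))$, whose coefficient of $u^{\ell-r}$ is $t_{ij;\mb}^{(r)}$. Since $\pr_\chi$ is $\mathbb{C}[u]$-linear, verifying $\mathfrak{m}$-invariance for every $t_{ij;\mb}^{(r)}$ is equivalent to showing $[a, T_{ij;\mb}(u)] \in I_\chi[[u^{-1}]]$ for every $a \in \mathfrak{m}$. Since $\mathfrak{m}$ is spanned by the matrix units $e_{h,k} \in \g$ with $\col(h) > \col(k)$, it suffices to check this for $a = e_{h,k}$ of this form.

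The first reduction is to the nearest-neighbour generators $a = e_{h,k}$ with $\col(h) = \col(k) + 1$. For pairs with $\col(h) - \col(k) \geq 2$, one can choose an intermediate index $p$ and write $e_{h,k} = \pm[e_{h,p}, e_{p,k}]$ as a supercommutator of two elements of $\mathfrak{m}$ with strictly smaller column-difference. Using the Jacobi identity for the $\chi$-twisted action, together with the fact that $\chi$ vanishes on $[\mathfrak{m}, \mathfrak{m}]$, induction on $\col(h) - \col(k)$ reduces the verification to the nearest-neighbour case.

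For nearest-neighbour $a$, I would apply the commutator formula (\ref{detcomp1}) after identifying $a$ (up to sign) with the image under $t_{\row(h), \row(k);\mb}$ of the matrix unit $e_{\col(h), \col(k)} \in Mat_\ell$. The resulting expression for $[a, T_{ij;\mb}(u)]$ decomposes as a sum over positions $s$ of terms in which the matrix unit $e_{\col(h), \col(k)}$ is inserted by right multiplication into some $y_s$ (an elementary column operation on $A(u)$) or by left multiplication into some $y_{s+1}$ (an elementary row operation on $A(u)$). Because $A(u)$ is upper Hessenberg with $1$'s on the subdiagonal and zeros below, these operations produce row-determinants of matrices that either have a repeated column (and therefore vanish) or collapse onto the subdiagonal $1$'s. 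In the subcase $\row(h) = \row(k)$, where by (\ref{chidef}) $\chi(a) = (-1)^{|\row(h)|+1} \neq 0$, the nonvanishing contributions from this collapse telescope precisely against the $\chi(a)$-contribution, giving an element of $I_\chi$ once $\pr_\chi$ is applied.

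The main obstacle is the bookkeeping of super signs throughout the commutator calculation. The signs $(-1)^{|i|\,|j|+|i|\,|h|+|j|\,|h|}$ from (\ref{detcomp1}) must be reconciled with the signs $(-1)^{\col(j)-\col(i)}$ and $(-1)^{|i|}$ built into $\tilde e_{i,j}$, as well as the signs $(-1)^{|i_1|+\cdots+|i_s|}$ appearing in (\ref{tdef}). Verifying that these signs are consistent and that the telescoping in the nearest-neighbour case yields exactly the correct multiple of $\chi(a)$ constitutes the bulk of the technical work.
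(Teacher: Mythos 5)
Your strategy coincides with the paper's: pass to the generating function $t_{hk;\mb}\big(\rdet A(u)\big)$, reduce to the subdiagonal generators of $\mathfrak m$, and expand the supercommutator with \eqref{detcomp1}. The reduction to column-difference one is sound (the paper simply asserts that $\mathfrak m$ is generated by the elements $t_{ij;\mb}(e_{c+1,c})$; your justification via the Jacobi identity, $[\mathfrak m, I_\chi]\subseteq I_\chi$ and $\chi([\mathfrak m,\mathfrak m])=0$ is the standard one).

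The gap is in the nearest-neighbour case, and it is not sign bookkeeping. After \eqref{detcomp1} the commutator is a difference of two products of row-determinants, and the surviving insertions of $e_{c+1,c}$ leave matrices whose entries contain the tensor-algebra product $e_{c+1,c}(u+e_{c+1,c+1}+\rho_{c+1})$. The cancellation of the two products is \emph{not} a formal consequence of repeated columns or of collapsing onto the subdiagonal $1$'s; it rests on the congruence \eqref{crue}. Concretely, applying $t_{ij;\mb}$ to that entry produces, via \eqref{uptij1}, a sum of quadratic terms $\sum_k t_{ik;\mb}(e_{c+1,c})\,t_{kj;\mb}(u+e_{c+1,c+1}+\rho_{c+1})$ in $U(\g)$; rewriting each with the supercommutator so that the $\mathfrak m$-factor sits on the left (where, modulo $I_\chi$, it may be replaced by its $\chi$-value) produces a correction term $\sum_{k}(-1)^{|k|}=m-n$, which is exactly $\rho_{c+1}-\rho_c$ by \eqref{rhodef}. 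Only after this replacement do the row and column operations reduce both products to $(u+\rho_c)\,\rdet A_{1,c-1}(u)\cdot\rdet A_{c+2,\ell}(u)$ so that they cancel. Your sketch never isolates this identity, and your proposed mechanism (a telescoping against a separate ``$\chi(a)$-contribution'' in the case $\row(h)=\row(k)$) does not match it: the argument is uniform in $i,j$, and $\chi$ enters only through the reduction of the quadratic entries modulo $I_\chi$. As written, the specific constants $\rho_r=-(\ell-r)(m-n)$ play no role in your argument, yet they are precisely what this step forces (note the superdimension $m-n$); so the crucial step is missing rather than merely deferred.
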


\begin{proof}
Firstly we observe that the statement is trivial when $\ell\leq 1$, hence we may assume $\ell\geq 2$. Note that $\mathfrak{m}$ is generated by elements of the form $t_{ij;\mb}(e_{c+1,c})$, hence it suffices to show that 
\[
\pr_\chi\Big( \big[t_{ij;\mb}(e_{c+1,c}), t_{hk;\mb}\big(\rdet A(u)\big)\big] \Big)=0
\]
for all $1\leq i,j,h,k\leq m+n$ and $1\leq c\leq \ell-1$. In this proof, we omit the fixed $\mb$ in our notation which shall cause no confusion.

By (\ref{detcomp1}), up to an irrelevant sign, we have
\begin{align*}
\Big[t_{ij}&\big(e_{c+1,c}\big), t_{hk}\big(\rdet A(u)\big)\Big]=\\
&t_{hj}\big(\rdet A_{1,c-1}(u)\big) t_{ik}(\rdet \left(
\begin{array}{cccc}
e_{c+1,c} & e_{c+1,c+1} & \cdots & e_{c+1,\ell}\\
1 & u+e_{c+1,c+1}+\rho_{c+1} & \cdots &e_{c+1,\ell}\\
\vdots & &\ddots &\vdots\\
0& \cdots & 1 & u+e_{\ell,\ell}+\rho_{\ell} 
\end{array}
\right))\\
&-t_{hj}(\rdet \left(
\begin{array}{cccc}
u+e_{1,1}+\rho_1 & \cdots & e_{1,c} & e_{1,c}\\
1 & \ddots &   &\vdots\\
\vdots & & u+e_{c,c}+\rho_c & e_{c,c}\\
0& \cdots & 1 & e_{c+1,c} 
\end{array}
\right)) t_{ik}\big(\rdet A_{c+2,\ell}(u)\big).
\end{align*}
A crucial observation is that for $1\leq i,j\leq m+n$ and $1\leq c\leq \ell-1$, we have
\begin{equation}\label{crue}
 t_{ij}\big(e_{c+1,c}(u+e_{c+1,c+1}+\rho_{c+1})\big)-t_{ij}\big(u+e_{c+1,c+1}+\rho_c\big) \equiv 0 \quad (\text{mod} \,\,I_\chi).
\end{equation}
Indeed, it is enough to check (\ref{crue}) when $\ell=2$. The trick here is to rewrite the quadratic terms in $U(\mathfrak{g})$ by supercommtator relations. Then the term $\rho_c$ gives exactly the required constant so that the left-hand side of (\ref{crue}) can be expressed by a linear combination of elements in $I_{\chi}$.

Therefore, 
\begin{align*}
\Big[t_{ij}&\big(e_{c+1,c}\big), t_{hk}\big(\rdet A(u)\big)\Big]\equiv\\
&t_{hj}\big(\rdet A_{1,c-1}(u)\big) t_{ik}(\rdet \left(
\begin{array}{cccc}
1 & e_{c+1,c+1} & \cdots & e_{c+1,\ell}\\
1 & u+e_{c+1,c+1}+\rho_{c} & \cdots &e_{c+1,\ell}\\
\vdots & &\ddots &\vdots\\
0& \cdots & 1 & u+e_{\ell,\ell}+\rho_{\ell} 
\end{array}
\right))\\
&-t_{hj}(\rdet \left(
\begin{array}{cccc}
u+e_{1,1}+\rho_1 & \cdots & e_{1,c} & e_{1,c}\\
1 & \ddots &   &\vdots\\
\vdots & & u+e_{c,c}+\rho_c & e_{c,c}\\
0& \cdots & 1 & 1 
\end{array}
\right)) t_{ik}\big(\rdet A_{c+2,\ell}(u)\big) 
\end{align*}
modulo $I_\chi$. Making the obvious row and column operations gives that 
\[
\rdet \left(
\begin{array}{cccc}
1 & e_{c+1,c+1} & \cdots & e_{c+1,\ell}\\
1 & u+e_{c+1,c+1}+\rho_{c} & \cdots &e_{c+1,\ell}\\
\vdots & &\ddots &\vdots\\
0& \cdots & 1 & u+e_{\ell,\ell}+\rho_{\ell} 
\end{array}
\right)=(u+\rho_c)\rdet A_{c+2,\ell}(u),
\]
\[
\rdet \left(
\begin{array}{cccc}
u+e_{1,1}+\rho_1 & \cdots & e_{1,c} & e_{1,c}\\
1 & \ddots &   &\vdots\\
\vdots & & u+e_{c,c}+\rho_c & e_{c,c}\\
0& \cdots & 1 & 1 
\end{array}
\right)=(u+\rho_c)\rdet A_{1,c-1}(u).
\]
Substituting these back shows that 
$\pr_\chi \Big( \big[t_{ij}(e_{c+1,c}), t_{hk}\big(\rdet A(u)\big)\big] \Big) =0$.
\end{proof}

Set a new notation 
$t_{ij;\mb}(e_{r,r}) = (-1)^{|i|} e_{i,j}^{[r]}\in U(\mathfrak{h})$, where $\mathfrak{h}\cong ({\gl_{m|n}})^{\oplus \ell}$.
Clearly, $\mathfrak{h}$ has a basis $\{ e_{i,j}^{[r]}|1\leq r\leq \ell, 1\leq i,j\leq m+n\}$.
Define \[\eta:U(\mathfrak{h})\rightarrow U(\mathfrak{h}), \quad e_{i,j}^{[r]}\mapsto e_{i,j}^{[r]}-\del_{ij}\rho_r.\]

Let $\xi:U(\mathfrak{p})\rightarrow U(\mathfrak{h})$ be the algebra homomorphism induced from the natural projection $\mathfrak{p}\twoheadrightarrow \mathfrak{h}$. Define the map 
$\mu:= \eta \circ \xi :U(\mathfrak{p})\rightarrow U(\mathfrak{h})\cong U(\mathfrak{gl}_{m|n})^{\otimes \ell}$.
 
\begin{lemma}\label{muimg}
For $1\leq i,j\leq m+n$ and $r>0$,
\[
\mu(t_{ij;\mb}^{(r)})=\sum_{1\leq s_1<\cdots<s_r\leq \ell}\,\,\sum_{1\leq i_1,\ldots, i_{r-1}\leq m+n} 
(-1)^{|i|+|i_1|+\cdots+|i_{r-1}|}
e_{i,i_1}^{[s_1]}e_{i_1,i_2}^{[s_2]}\cdots e_{i_{r-1},j}^{[s_r]}.
\]
\end{lemma}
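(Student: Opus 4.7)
The plan is to compute $\mu(t_{ij;\mb}^{(r)}) = \eta(\xi(t_{ij;\mb}^{(r)}))$ by unwinding the explicit definition (\ref{tdef}): apply $\xi$ first to project each $\tilde e_{i_t,j_t}$ onto the $0$-graded block $\mathfrak{h}\subset \mathfrak{p}$, then apply $\eta$ factor-by-factor, and match the surviving expression against the claimed right-hand side.

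First I would observe that $\xi$ is the algebra homomorphism induced from the Lie algebra projection $\mathfrak{p}\twoheadrightarrow \mathfrak{g}(0)=\mathfrak{h}$, and hence it annihilates every $\tilde e_{i,j}$ with $\col(i)<\col(j)$, since such elements lie in $\bigoplus_{d>0}\mathfrak g(d)$, which is the kernel of the projection. Combined with condition~(2) of (\ref{tdef}), this forces every surviving term to satisfy $\col(i_t)=\col(j_t)=:s_t$ for all $t$. Each such factor then has Kazhdan degree $1$, so condition~(1) forces $s=r$ and condition~(3) forces $s_1<s_2<\cdots<s_r$. Introducing $a_0:=i$, $a_r:=j$, and $a_t:=\row(j_t)=\row(i_{t+1})$ for $1\le t\le r-1$, one has $i_t=a_{t-1}\star s_t$ and $j_t=a_t\star s_t$; moreover the prefactor rewrites as $(-1)^{|i_1|+\cdots+|i_r|}=(-1)^{|a_0|+\cdots+|a_{r-1}|}$ using $|i_t|=|\row(i_t)|=|a_{t-1}|$.

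Under the canonical identification $e_{a\star s,\,b\star s}\leftrightarrow e_{a,b}^{[s]}$ of the $s$-th diagonal block of $\mathfrak h$ with the $s$-th tensor factor of $U(\gl_{m|n})^{\otimes\ell}$, the surviving piece of (\ref{etil}) becomes
\[
\xi(\tilde e_{i_t,j_t}) = e_{a_{t-1},a_t}^{[s_t]} + \delta_{a_{t-1},a_t}(-1)^{|a_{t-1}|}\rho_{s_t}.
\]
Now applying $\eta$ factor-by-factor, which is legitimate because distinct $s_t$'s land in commuting tensor components of $U(\gl_{m|n})^{\otimes \ell}$ and $\eta$ differs from the identity only by a central scalar on each diagonal generator, the $\rho_{s_t}$-correction absorbed into $\tilde e_{i_t,i_t}$ is precisely cancelled by the $\rho$-shift built into the definition of $\eta$, giving $\eta\circ\xi(\tilde e_{i_t,j_t})=e_{a_{t-1},a_t}^{[s_t]}$. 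Assembling the $r$ factors, relabeling $i_t:=a_t$ for $1\le t\le r-1$, and summing over all admissible $(s_1,\dots,s_r)$ and $(i_1,\dots,i_{r-1})$ yields exactly the stated formula.

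The delicate point is the scalar-cancellation step: one has to verify on a single diagonal factor, and separately on both parities of $a_{t-1}$, that the $(-1)^{|a_{t-1}|}\rho_{s_t}$-shift carried into $\xi(\tilde e_{i_t,i_t})$ exactly matches the correction that $\eta$ subtracts on the diagonal generator $e_{a_{t-1},a_{t-1}}^{[s_t]}$. Once this cancellation is checked on one factor, the rest is direct bookkeeping of indices and signs.
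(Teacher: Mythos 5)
Your route is, up to packaging, the same computation as the paper's: the paper applies the substitution $e_{r,s}\mapsto\delta_{rs}(e_{r,r}-\rho_r)$ directly to the matrix $A(u)$ and reads the answer off the row-determinant $\prod_{r}(u+e_{r,r})$ of the resulting lower-triangular matrix, while you expand (\ref{tdef}) term by term. Your observation that $\xi$ kills every factor with $\col(i_t)<\col(j_t)$, forcing $s=r$ and $s_1<\cdots<s_r$, is exactly the content of that determinant collapse, and your index and sign bookkeeping there is correct.

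The step you defer, however, is precisely where the argument is not routine, and as you have stated it the cancellation fails. With the paper's definition $\eta(e_{i,j}^{[r]})=e_{i,j}^{[r]}-\delta_{ij}\rho_r$, a diagonal factor sitting in a row $a$ of parity $|a|=\bar{1}$ gives
\[
\eta\circ\xi\bigl(\tilde e_{a\star s,\,a\star s}\bigr)=e_{a,a}^{[s]}-\rho_s+(-1)^{|a|}\rho_s=e_{a,a}^{[s]}-2\rho_s,
\]
so the cancellation you invoke holds only on even rows; already $\mu(t_{ii;\mb}^{(1)})$ would differ from $\kappa_\ell(t_{ii}^{(1)})$ by $(1-(-1)^{|i|})\sum_{c}\rho_c$, which is nonzero for $|i|=\bar{1}$, $m\neq n$, $\ell\geq 2$. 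The cancellation, and hence the lemma, require $\eta(e_{i,j}^{[r]})=e_{i,j}^{[r]}-\delta_{ij}(-1)^{|i|}\rho_r$, i.e.\ the shift by the supertrace character. (Note also that the shift as literally printed does not extend to an algebra homomorphism of $U(\mathfrak{h})$, since $\delta_{ij}\rho_r$ fails to vanish on $[e_{i,j}^{[r]},e_{j,i}^{[r]}]=e_{i,i}^{[r]}+e_{j,j}^{[r]}$ when $|i|\neq|j|$, whereas $\delta_{ij}(-1)^{|i|}\rho_r$ does vanish there; and the corrected $\eta$ is what the paper's own proof implicitly uses, because $t_{ij;\mb}(e_{r,r}-\rho_r)=(-1)^{|i|}e_{i,j}^{[r]}-\delta_{ij}\rho_r=(-1)^{|i|}\bigl(e_{i,j}^{[r]}-\delta_{ij}(-1)^{|i|}\rho_r\bigr)$.) So your argument is sound once you actually perform the one-factor check you postponed and adjust the sign in $\eta$ accordingly; leaving that check as an unverified assertion is the one genuine gap, and it is exactly the place where the verification does not go through against the stated definitions.
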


\begin{proof}
Applying the map $e_{r,s}\mapsto \del_{r,s} (e_{r,r}-\rho_r)$ to the matrix $A(u)$ gives a diagonal matrix with determinant $(u+e_{1,1})(u+e_{2,2})\cdots (u+e_{\ell,\ell})$, where its $u^{\ell-r}$-coefficient equals to
$\sum_{1\leq s_1<\cdots<s_r\leq \ell} e_{s_1,s_1}e_{s_2,s_2}\cdots e_{s_r,s_r}$. Now the lemma follows from (\ref{uptij1}).
\end{proof}

Now we are ready to state and prove our main result of this article.
\begin{theorem}\label{main}
Let $\pi$ be an $(m,n)$-colored rectangle and $\mb$ be the $\ep$-$\del$ sequence determined by the rows of $\pi$. Then there exists an isomorphism $Y_{m|n}^{\ell}\cong \W_\pi$ of filtered superalgebras such that the generators \[\{t_{ij;\mb}^{(r)}|1\leq i,j\leq m+n, 1\leq r\leq \ell\}\] of $Y_{m|n}^{\ell}$ are sent to the elements of $\W_\pi$ with the same names defined by (\ref{tdef}).
\end{theorem}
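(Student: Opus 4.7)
The plan is to construct a homomorphism $\phi: Y_{m|n} \to U(\mathfrak{p})$ sending $t_{ij}^{(r)} \mapsto t_{ij;\mb}^{(r)}$, show that it factors through $Y_{m|n}^{\ell}$ with image in $\W_\pi$, and then establish that it is a filtered isomorphism onto $\W_\pi$ by comparing with $\kappa_\ell$ and counting dimensions.

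First, I would verify that the assignment $t_{ij}^{(r)}\mapsto t_{ij;\mb}^{(r)}$ respects the RTT relations (\ref{RTT}). By the observation preceding Proposition \ref{recminv}, the generating series $t_{ij;\mb}(u)$ is the $(i,j)$-entry of the matrix $T(u) := \bigl(t_{ij;\mb}(\rdet A(u))\bigr)_{i,j}$ with entries viewed through the map (\ref{uptij1}). Iterating the commutator formula (\ref{detcomp1}) and using the commutation relations (\ref{etilrel}) for the $\tilde{e}_{i,j}$ together with standard manipulations of the row determinant $\rdet A(u)$ (mimicking the quantum-determinant computation of \cite{BK} in the super setting) yields precisely the super RTT relations for $T(u)$; this produces the desired algebra map $\phi: Y_{m|n}\to U(\mathfrak{p})$.

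Next, since $A(u)$ is an $\ell\times\ell$ matrix of polynomials in $u$ of degree at most one, $\rdet A(u)$ has degree $\ell$ in $u$, so $t_{ij;\mb}^{(r)} = 0$ for $r > \ell$. Hence $\phi$ annihilates the ideal $I_\ell$ and descends to a homomorphism $\bar\phi: Y_{m|n}^{\ell}\to U(\mathfrak{p})$. By Proposition \ref{recminv}, the image lies in $\W_\pi$, so we obtain $\bar\phi: Y_{m|n}^{\ell}\to \W_\pi$. The degree constraint (1) in the definition (\ref{tdef}) shows that $\bar\phi$ is filtered, sending $F_r Y_{m|n}^{\ell}$ into $F_r \W_\pi$ with respect to the Kazhdan filtration.

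To show $\bar\phi$ is injective, I compose with the projection $\mu:U(\mathfrak{p})\to U(\mathfrak{h})$. By Lemma \ref{muimg} and formula (\ref{kpimg}), we have $\mu\circ\bar\phi = \kappa_\ell$ under the identification $U(\mathfrak{h})\cong U(\gl_{m|n})^{\otimes\ell}$, and $\kappa_\ell$ is injective by Proposition \ref{PBWSY}(1) (viewed as the map (\ref{kappainj})). Hence $\bar\phi$ is injective. Finally, to obtain surjectivity, I pass to associated graded algebras. The injection $\gr\bar\phi: \gr Y_{m|n}^{\ell}\hookrightarrow \gr\W_\pi\cong S(\g^e)$ (using Proposition \ref{dimprop}) is a degree-preserving map between filtered-finite-dimensional graded algebras whose graded pieces have equal dimensions by Corollary \ref{dimcoro}. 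It is therefore a graded isomorphism, which lifts to $\bar\phi$ being a filtered isomorphism.

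The main obstacle is the first step: rigorously verifying the RTT relations for the elements $t_{ij;\mb}^{(r)}$ defined via $\rdet A(u)$. The computation is essentially a super analogue of the quantum-determinant manipulations in \cite{BK}, but extra care is required to track the signs $(-1)^{|i|\,|j|+|i|\,|h|+|j|\,|h|}$ arising from the super-structure and to confirm that the correction terms involving the shifts $\rho_r$ from (\ref{rhodef}) assemble correctly, as they already did in the proof of Proposition \ref{recminv} through identity (\ref{crue}). Once this is in place, the remaining steps are essentially formal.
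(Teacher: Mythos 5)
Your overall architecture (injectivity via comparison with $\kappa_\ell$ through $\mu$, then dimension counting against $S(\g^e)$) matches the paper's, but your first step contains a genuine gap, and it is precisely the step the paper's proof is engineered to avoid. You propose to verify directly that the elements $t_{ij;\mb}^{(r)}$ of (\ref{tdef}) satisfy the RTT relations (\ref{RTT}), deferring this to ``standard manipulations mimicking \cite{BK}.'' This is not a routine verification: it is the single hardest computation in this circle of ideas (in \cite{BK} the analogous statement is a substantial piece of work, routed through the shifted Yangian presentation), and nothing the paper develops --- (\ref{detcomp1}), (\ref{etilrel}), (\ref{crue}) --- is set up to deliver it. As written, the existence of your homomorphism $\phi$ is an assertion, not an argument, so the proof is incomplete at its foundation.

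The paper shows the relation check is unnecessary. One already has two injective algebra homomorphisms into $U(\gl_{m|n})^{\otimes\ell}$: the map $\kappa_\ell$ on $Y_{m|n}^{\ell}$ (Proposition \ref{PBWSY}), and the restriction of $\mu$ to $\W_\pi$, whose injectivity and image are extracted from the chain $\dim F_dS(\g^e)=\dim\mu(X_d)\leq\dim X_d\leq\dim F_d\W_\pi\leq\dim F_dS(\g^e)$, which simultaneously proves $X_d=F_d\W_\pi$. Since Lemma \ref{muimg} together with (\ref{kpimg}) gives $\mu(t_{ij;\mb}^{(r)})=\kappa_\ell(t_{ij;\mb}^{(r)})$, the two images coincide and $\mu^{-1}\circ\kappa_\ell$ is automatically an algebra isomorphism matching the named generators; the RTT relations in $\W_\pi$ then come out as a corollary rather than going in as an input. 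A smaller separate issue: in your surjectivity step you pass to associated graded and infer that $\gr\bar\phi$ is injective because $\bar\phi$ is; for filtered maps this implication fails in general. It is cleaner, and correct, to stay at the filtered level: $\bar\phi$ injective and filtered with $\dim F_dY_{m|n}^{\ell}=\dim F_d\W_\pi$ for every $d$ forces $\bar\phi(F_dY_{m|n}^{\ell})=F_d\W_\pi$, which is the surjectivity you need.
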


\begin{proof}
Again, the result is trivial when $\ell\leq 1$ so we assume that $\ell\geq 2$. By Proposition \ref{PBWSY}, the set of all supermonomials in the elements \[\{t_{ij;\mb}^{(r)}|1\leq i,j\leq m+n, 1\leq r\leq \ell\}\] of $Y_{m|n}^{\ell}$ taken in some fixed order and of total degree $\leq d$ forms a basis for $F_d Y_{m|n}^{\ell}$ (with respect to the canonical filtration). Since $\kappa_\ell$ is injective, by Corollary \ref{dimcoro} we have
\[
\dim \kappa_\ell(F_dY_{m|n}^{\ell}) = \dim F_d Y_{m|n}^{\ell} = \dim F_d S(\g^e),
\]
where $S(\g^e)$ is equipped with the Kazhdan grading.

Let $X_d$ denote the subspace of $U(\mathfrak{p})$ spanned by all supermonomials in the elements $\{t_{ij;\mb}^{(r)}|1\leq i,j\leq m+n, 1\leq r\leq \ell\}$ defined by (\ref{tdef}) taken in some fixed order and of total degree $\leq d$. By Lemma \ref{muimg} and the discussion above, we have $\mu(X_d)=\kappa_\ell(F_dY_{m|n}^{\ell})$.

Proposition \ref{recminv} assures that $X_d\subseteq F_d\W_\pi$. Together with Proposition \ref{dimprop}, we have 
\[
\dim F_dS(\g^e)=\dim \mu(X_d)\leq \dim X_d\leq \dim F_d\W_\pi\leq \dim F_dS(\g^e).
\]
Thus equality holds everywhere and hence $X_d=F_d\W_\pi$. Moreover, $\mu$ is injective and $\mu(t_{ij;\mb}^{(r)})=\kappa_\ell(t_{ij;\mb}^{(r)})$ for all $1\leq i,j\leq m+n$ and $0\leq r\leq \ell$. The composition $\mu^{-1}\circ \kappa_\ell:Y_{m|n}^{\ell}\rightarrow \W_\pi$ gives the required isomorphism.
\end{proof}

\begin{remark}
The connection between $W$-algebras of $so(n)$ and $sp(n)$ and their corresponding (twisted) Yangians has not been studied in full generality. Some partial results can be found in \cite{BR2,Ra}; see also \cite{Br} for an approach similar to this article.   
\end{remark}

\subsection*{Acknowledgements}
The author would like to thank the anonymous reviewers
for their valuable comments and suggestions to improve the
quality of this article. The author is supported by 
post-doctoral fellowship of the 
Institution of Mathematics, Academia Sinica, Taipei, Taiwan.

\end{document}